\numberwithin{equation}{section}
\newtheorem{Theorem}{Theorem}[section]
\newtheorem{Definition}{Definition}[section]
\newtheorem{Lemma}{Lemma}[section]
\newtheorem{Question}{Question}
 \def\@biblabel#1{#1.}
\newcommand{\uu}{{\bf u}}
\newcommand{\x}{{\bf x}}
\newcommand{\y}{{\bf y}}
\newcommand{\q}{{\bf q}}
\newcommand{\0}{{\bf 0}}
\begin{document}
\title{Eigenvalues and structured properties of P-tensors}

\author{Yisheng Song\thanks{School of Mathematics and Information Science  and Henan Engineering Laboratory for Big Data Statistical Analysis and Optimal Control,
 Henan Normal University, XinXiang HeNan,  P.R. China, 453007.
Email: songyisheng1@gmail.com. This author's work was supported by
the National Natural Science Foundation of P.R. China (Grant No.
11571905).  His work was partially done when he was
visiting The Hong Kong Polytechnic University.}, \quad Liqun Qi\thanks{ Department of Applied Mathematics, The Hong Kong Polytechnic University, Hung Hom,
Kowloon, Hong Kong. Email: maqilq@polyu.edu.hk. This author's work was supported by the Hong
Kong Research Grant Council (Grant No. PolyU 502111, 501212, 501913 and 15302114).}}

\date{\today}

 \maketitle

\begin{abstract}
\noindent  
 We define two new constants associated with real eigenvalues of a P-tensor. With the help of these two constants, in the case of P-tensors, we establish upper bounds of two important quantities, whose positivity is a necessary and sufficient condition for a general tensor to be a P-tensor.

\noindent {\bf Key words:}\hspace{2mm} P-tensor,  Complementarity problem,   Eigenvalues.  \vspace{3mm}

\noindent {\bf AMS subject classifications (2010):}\hspace{2mm}
47H15, 47H12, 34B10, 47A52, 47J10, 47H09, 15A48, 47H07.
  \vspace{3mm}

\end{abstract}


\section{Introduction}
\hspace{4mm}
Let $A = (a_{ij})$ be an $n \times n $ real matrix and $\q \in \mathbb{R}^n$.  Then the linear complementarity problem, denoted by LCP $(A, \q)$, is to find $ \x \in \mathbb{R}^n$ such that
$$	\x \geq \0, \q + A\x \geq \0, \mbox{ and }\x^\top (\q + A\x) = 0
\leqno{\bf LCP(A,\q)}$$
or to show that no such vector exists. It is well-known that the LCP$(A,\q)$ has wide and important applications in engineering and economics (Cottle, Pang and Stone \cite{CPS} and Han, Xiu and Qi \cite{HXQ}).

In past several decades, there have been a growing literature concerned with the error bounds for LCP $(A, \q)$. The error bounds for LCP $(A, \q)$ have been given in Chen and Xiang \cite{CX07,CX06}, Mathias and Pang \cite{MP90} for  P-matrix; Chen, Li, Wu, Vong \cite{CLWV} for MB-matrix; Dai \cite{D2011} for DB-matrix; Dai, Li, Lu \cite{DLL12,DLL13} for SB-matrix; Garc\'ia-Esnaola and Pe\~na \cite{GP09} for B-Matrix; Garc\'ia-Esnaola and Pe\~na \cite{GP12} for BS-Matrix; Garc\'ia-Esnaola and Pe\~na \cite{GP10}, Li and Zheng \cite{LZ14} for H-Matrix; Luo, Mangasarian, Ren, Solodov \cite{LMRS} for nondegenerate matrix. Recently, Sun and Wang \cite{SW13} studied the error bounds  for generalized linear complementarity problem under some
proper assumptions. The componentwise error bounds for LCP $(A, \q)$ was showed by Wang and Yuan \cite{WY11}.

The nonlinear complementarity problem, defined by a nonlinear function $F: \mathbb{R}^n
\to \mathbb{R}^n,$  denoted by NCP$(F)$, is to  find a vector $ \x \in \mathbb{R}^n$ such that
$$ \x \geq \0, F(\x) \geq \0, \mbox{ and }\x^\top F(\x) = 0, \leqno{\bf NCP(F)} $$
or to show that no such vector exists. The NCP was introduced by Cottle in his Ph.D. thesis in 1964. the study of NCP$(F)$ have a long history and wide
applications in mathematical sciences and applied sciences (Facchinei and Pang \cite{FaP}). We call  the NCP(F) the {\bf tensor complementarity problem}, denoted by TCP $( \mathcal{A},\q)$ iff  $F(x)=\q + \mathcal{A}\x^{m-1}$ in the NCP(F), i.e.,  finding $\x \in \mathbb{R}^n$ such that 	$$ \x \geq \0, \q + \mathcal{A}\x^{m-1} \geq \0, \mbox{ and }\x^\top (\q + \mathcal{A}\x^{m-1}) = 0\leqno{\bf TCP(\mathcal{A},\q)}$$
 or showing that no such vector exists, where $\mathcal{A} = (a_{i_1\cdots i_m})$ is a real $m$th order $n$-dimensional tensor (hypermatrix).

The TCP $(\mathcal{A},\q)$ is a natural extension of the LCP $(A,\q)$. It has some similar  properties to the LCP $(A,\q)$.  At the same time, the TCP $(\mathcal{A},\q)$, as a specially  structured  NCP(F), should have its particular and nice properties other than the general NCP(F). So how to obtain the nice properties and their applications of the TCP $( \mathcal{A},\q)$ will be very interesting by means of the  special   structure of  higher order tensors (hypermatrices).  Recently, the solution of TCP$( \mathcal{A},\q)$ and related problems have been well studied. For example, Che, Qi, Wei \cite{CQW} investigated the existence and uniqueness of solution of TCP $(\mathcal{A},\q)$ for some special tensors. Song and Qi \cite{SQ2015,SQ15} studied  the existence of solution of  the TCP $(\mathcal{A},\q)$ with the help of the structure of the tensor $\mathcal{A}$. Song and Yu \cite{SY15} showed the properties of solution set of  the TCP $(\mathcal{A},\q)$.  Luo, Qi and Xiu \cite{LQX} obtained the sparsest solutions to the TCP $(\mathcal{A},\q)$ for Z-tensors. Song and Qi \cite{SQ13}, Ling, He, Qi \cite{LHQ2015, LHQ15}, Chen, Yang, Ye \cite{CYY} studied the the tensor eigenvalue complementarity problem for higher order tensors.   See these papers and references therein.

In this paper, we introduce two new constants associated with real eigenvalues of P-tensors. With the help of these two constants, for a P-tensor $\mathcal{A}$, we establish upper bounds of two quantities $\alpha(F_{\mathcal{A}})$ and $\alpha(T_{\mathcal{A}})$.    These two quantities were defined for general tensors and even order tensors respectively by Song and Qi \cite{SQ-2015}.  It was shown there that an $m$-order $n$-dimensional tensor $\mathcal{A}$ is a P-(P$_0$-)tensor if and only if $\alpha(T_{\mathcal{A}})$ is positive (nonnegative), and when $m$ is even, $\mathcal{A}$ is a P-(P$_0$-)tensor if and only if $\alpha(F_{\mathcal{A}})$ is positive (nonnegative).

The rest of this article is organized as follows. In Section 2,  we will  give some definitions and basic conclusions, which will be used later on. In Section 3,  we will define two new constants associated with real eigenvalues of P-tensors and show their upper and lower bounds. In particular, for a P-tensor $\mathcal{A}$, we establish upper bounds of $\alpha(F_{\mathcal{A}})$ and $\alpha(T_{\mathcal{A}})$.

We briefly describe our notation. Denote $\mathbb{R}^n:=\{(x_1, x_2,\cdots, x_n)^T; x_i\in  \mathbb{R},  i\in I_n\}$ and  ${\mathbb{C}}^n:=\{(x_1,x_2,\cdots,x_n)^T;x_i\in {\mathbb{C}},  i \in I_n \}$, where $\mathbb{R}$ is the set of real numbers. and ${\mathbb{C}}$ is the set of complex numbers.  Denote $I_n := \{ 1,2, \cdots, n \}$. For any vector $\x \in
\mathbb{C}^n$, $\x^{[m-1]}$ is a vector in $\mathbb{C}^n$ with
its $i$th component defined as $x_i^{m-1}$ for $i \in I_n$, and $\x \in
\mathbb{R}^n$, $\x_+$ is a vector in $\mathbb{R}^n$ with $(\x_+)_i=x_i$ if $x_i\geq0$ and $(\x_+)_i=0$ if $x_i<0$  for $i \in I_n$.  We assume that $m \ge 2$ and $n \ge 1$.   We
use small letters $x, u, v, \alpha, \cdots$, for scalars, small bold
letters $\x, \y, \uu, \cdots$, for vectors, capital letters $A, B,
\cdots$, for matrices, calligraphic letters $\mathcal{A}, \mathcal{B}, \cdots$, for tensors.    We
denote the zero tensor in $T_{m, n}$ by $\mathcal{O}$.  Denote the set of all
real $m$th order $n$-dimensional tensors by $T_{m, n}$. We denote by $\mathcal{A}^J_r$ the principal sub-tensor of a tensor $\mathcal{A} \in T_{m, n}$ such that the entries of  $\mathcal{A}^J_r$ are indexed by $J \subset I_n$ with $|J|=r$ ($1 \le r\leq n$), and denote by $\x_J$ the $r$-dimensional sub-vector of a vector $\x \in \mathbb{R}^n$, with the components of $\x_J$ indexed by $J$.

\section{Preliminaries and basic facts}
\hspace{4mm}

In this section, we will  collect some basic definitions and facts, which will be used later on.

All the tensors discussed in this paper are real. An $m$-order $n$-dimensional tensor (hypermatrix) $\mathcal{A} = (a_{i_1\cdots i_m})$ is a multi-array of real entries $a_{i_1\cdots
	i_m}$, where $i_j \in I_n$ for $j \in I_m$. If the entries $a_{i_1\cdots i_m}$ are
invariant under any permutation of their indices, then $\mathcal{A}$ is
called a {\bf symmetric tensor}.

Let $\mathcal{A} = (a_{i_1\cdots i_m}) \in T_{m, n}$ and $\x \in\mathbb{R}^n$.   Then $\mathcal{A} \x^{m-1}$ is a vector in $\mathbb{R}^n$ with
its $i$th component as
$$\left(\mathcal{A} \x^{m-1}\right)_i: = \sum_{i_2, \cdots, i_m=1}^n a_{ii_2\cdots
	i_m}x_{i_2}\cdots x_{i_m}$$ for $i \in I_n$.   We now give the
definitions of P-tensors, which was introduced by Song and Qi \cite{SQ-2015}.
\begin{Definition} \label{d1}\em
	Let $\mathcal{A}  = (a_{i_1\cdots i_m}) \in T_{m, n}$.   We say that $A$ is \begin{itemize}
		\item[(i)]
	 a P$_0$ tensor iff for any nonzero vector $\x$ in $\mathbb{R}^n$, there
	exists $i \in I_n$ such that $x_i \not = 0$ and
	$$ x_i \left(\mathcal{A} \x^{m-1}\right)_i \ge 0;$$
	\item[(ii)] a P tensor iff for any nonzero vector $\x$ in $\mathbb{R}^n$,
	$$\max_{i\in I_n}x_i \left(\mathcal{A} \x^{m-1}\right)_i > 0.$$
	\end{itemize}
\end{Definition}
The concepts of tensor eigenvalues were introduced by Qi \cite{LQ1,LQ2} to the higher order symmetric tensors, and the existence of the eigenvalues and some applications were studied there. Lim \cite{LL} independently introduced real tensor eigenvalues and obtained some existence results using a variational approach.
\begin{Definition} \label{d2}\em
	Let $\mathcal{A}  = (a_{i_1\cdots i_m}) \in T_{m, n}$.  A number $\lambda \in \mathbb{C}$ is called \begin{itemize}
		\item[(i)] an {\bf eigenvalue} of $\mathcal{A}$ iff there is a nonzero vector $\x \in \mathbb{C}^n$ such that
	\begin{equation} \label{eig}
	\mathcal{A} \x^{m-1} = \lambda \x^{[m-1]},
	\end{equation}
	and $\x$  is called an {\bf eigenvector} of $\mathcal{A}$, associated with $\lambda$.  An eigenvalue $\lambda$ corresponding a real eigenvector $\x$ is real and is called an {\bf
		H-eigenvalue}, and $\x$ is called an {\bf H-eigenvector} of $\mathcal{A}$, respectively;\\
	\item[(ii)]  an {\bf E-eigenvalue} of $\mathcal{A}$ iff there is a nonzero vector $\x \in \mathbb{C}^n$ such that
	\begin{equation} \label{eig1}
	\mathcal{A} \x^{m-1} = \lambda \x,  \ \ \x^\top \x = 1,
	\end{equation}
 and $\x$ is
	called an {\bf E-eigenvector} of $\mathcal{A}$,  associated with $\lambda$.  An E-eigenvalue  $\lambda$ corresponding a real E-eigenvector $\x$ is real and is called a {\bf
		Z-eigenvalue}, and $\x$ is called a {\bf Z-eigenvector} of $\mathcal{A}$, respectively.\end{itemize}
\end{Definition}
The concept of principal sub-tensors was introduced and used in \cite{LQ1} for symmetric tensors.
\begin{Definition} \label{d3}\em Let $\mathcal{A}  = (a_{i_1\cdots i_m}) \in T_{m, n}$.
A tensor $\mathcal{C} \in T_{m, r}$  is called {\bf a principal sub-tensor}  of a tensor $\mathcal{A} = (a_{i_1\cdots i_m}) \in T_{m, n}$ ($1 \le r\leq n$) iff there is a set $J$ that composed of $r$ elements in $I_n$ such that
$$\mathcal{C} = (a_{i_1\cdots i_m}),\mbox{ for all } i_1, i_2, \cdots, i_m\in J.$$
Denote such a principal sub-tensor $\mathcal{C}$ by $\mathcal{A}_r^J$. \end{Definition}
The following is a basic conclusion in the study of P-tensors.
\begin{Lemma}\em{(Song and Qi \cite[Theorem 3.1 and 4.1]{SQ-2015})}\label{le:21}\
Let $\mathcal{A} \in T_{m, n}$ be a P-tensor. Then
\begin{itemize}
	\item[(i)] all principal diagonal entries of $\mathcal{A}$ are positive ($a_{ii\cdots i}>0$ for all $i\in I_n$);
	 \item[(ii)]  each principal sub-tensor of $\mathcal{A}$ is a P-tensor;
	\item[(iii)] all  H-eigenvalues of each principal sub-tensor of $\mathcal{A}$ are positive  when $m$ is even;
 \item[(iv)] all Z-eigenvalues of each principal sub-tensor of $\mathcal{A}$ are positive  when $m$ is even.
 \end{itemize}
 \end{Lemma}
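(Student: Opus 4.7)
The plan is to dispatch the four parts in the order (i) $\Rightarrow$ (ii) $\Rightarrow$ (iii) and (iv), since the later parts build on the earlier ones. None of the steps should be hard; the difficulty is really just in choosing the right test vector for each component of the definition of a P-tensor.

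For (i), I would plug in the unit vector $\x = \e_i$ into Definition~\ref{d1}(ii). All but the $i$th coordinate of $\x$ vanish, so $x_j(\mathcal{A}\x^{m-1})_j = 0$ for $j \ne i$, while $x_i(\mathcal{A}\x^{m-1})_i = a_{ii\cdots i}$. The P-tensor inequality $\max_{j\in I_n} x_j(\mathcal{A}\x^{m-1})_j > 0$ then forces $a_{ii\cdots i} > 0$.

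For (ii), given a principal sub-tensor $\mathcal{A}_r^J$ and a nonzero $\y \in \mathbb{R}^r$, I would lift $\y$ to an $n$-vector $\x$ by setting $\x_J = \y$ and $x_i = 0$ for $i \notin J$. The key observation is that, because any summand in $(\mathcal{A}\x^{m-1})_i$ with an index outside $J$ picks up a zero factor, we have $(\mathcal{A}\x^{m-1})_i = (\mathcal{A}_r^J \y^{m-1})_i$ for $i \in J$, whereas $x_i(\mathcal{A}\x^{m-1})_i = 0$ for $i \notin J$. Applying Definition~\ref{d1}(ii) to $\x$ (which is nonzero since $\y$ is) yields $\max_{i\in J} y_i(\mathcal{A}_r^J\y^{m-1})_i > 0$, i.e.\ $\mathcal{A}_r^J$ is a P-tensor.

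For (iii) and (iv), assume $m$ is even. Fix a principal sub-tensor $\mathcal{A}_r^J$; by (ii) it is a P-tensor. If $\lambda$ is an H-eigenvalue of $\mathcal{A}_r^J$ with H-eigenvector $\y \in \mathbb{R}^r \setminus \{\mathbf{0}\}$, then from \eqref{eig} we have $y_i(\mathcal{A}_r^J\y^{m-1})_i = \lambda y_i^m$ for every $i$. Since $m$ is even, $y_i^m \ge 0$ with at least one strict inequality, so the P-tensor property applied to $\y$ gives $\lambda \max_i y_i^m > 0$, hence $\lambda > 0$. The Z-eigenvalue case (iv) is identical except that $y_i(\mathcal{A}_r^J\y^{m-1})_i = \lambda y_i^2$ from \eqref{eig1}, and $\y^\top\y = 1$ guarantees $\max_i y_i^2 > 0$.

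The only step that requires care is (ii), and specifically the verification that zero-extension interacts correctly with the tensor product $\mathcal{A}\x^{m-1}$; everything else is mechanical substitution. I would therefore structure the write-up so that (ii) is proved cleanly first, and then (i), (iii), (iv) are each a one-line application of the definition with a well-chosen $\x$.
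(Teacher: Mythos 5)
Your proposal is correct, but note that the paper does not actually prove this lemma: it is imported verbatim as a citation of Song and Qi \cite[Theorems 3.1 and 4.1]{SQ-2015}, so there is no in-paper argument to compare against. Your four steps (diagonal test vector $\e_i$ for (i), zero-extension for (ii), and the sign computation $y_i(\mathcal{A}_r^J\y^{m-1})_i=\lambda y_i^m$ resp.\ $\lambda y_i^2$ for (iii)--(iv)) are the standard derivations and are all sound; the only cosmetic point is in (iii), where you should conclude from $\max_i \lambda y_i^m>0$ that some single term $\lambda y_i^m$ is positive (forcing $\lambda>0$ since $y_i^m\ge 0$ for even $m$), rather than writing the max as $\lambda\max_i y_i^m$, which presupposes $\lambda\ge 0$.
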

 Recall that an operator $T : \mathbb{R}^n \to \mathbb{R}^n$ is called {\bf
 	positively homogeneous} iff $T(t\x)=tT(\x)$ for each $t>0$ and all
 $\x\in \mathbb{R}^n$.  For $\x \in \mathbb{R}^n$, it is known well that
 $$\|\x\|_\infty:=\max\{|x_i|;i \in I_n \}\mbox{ and } \|\x
 \|_2:=\left(\sum_{i=1}^n|x_i|^2\right)^{\frac12}$$ are two main norms
 defined on $\mathbb{R}^n$.  Then for a continuous, positively homogeneous
 operator $T : \mathbb{R}^n\to \mathbb{R}^n$,  it is obvious that
 $$\|T\|_\infty:=\max_{\|\x\|_\infty=1}\|T(\x)\|_\infty$$ is an
 operator norm of $T$ and
 $\|T(\x)\|_\infty\leq\|T\|_\infty\|\x\|_\infty$ for any $\x \in
 \mathbb{R}^n$.

Let $\mathcal{A} \in T_{m, n}$.  Define an operator $T_\mathcal{A} : \mathbb{R}^n \to \mathbb{R}^n$
by for any $\x \in \mathbb{R}^n$,
\begin{equation} \label{TA}
T_\mathcal{A} (\x) := \begin{cases}\|\x \|_2^{2-m}\mathcal{A} \x^{m-1},\ \x \neq\0\\
\0,\ \ \ \ \ \ \ \ \ \ \  \ \ \ \ \ \ \ \
\x =\0.\end{cases}
\end{equation}
When $m$ is even, define another operator $F_\mathcal{A} : \mathbb{R}^n \to \mathbb{R}^n$ by for any $\x \in \mathbb{R}^n$,
\begin{equation} \label{FA}
F_\mathcal{A} (\x) :=\left(\mathcal{A}  \x^{m-1}\right)^{\left[\frac1{m-1}\right]}.
\end{equation}
 Clearly, both
 $F_\mathcal{A}$ and $T_\mathcal{A}$ are continuous and positively homogeneous. The
 following upper bounds of the operator norm were established  by
 Song and Qi \cite{SQ}.
 \begin{Lemma} \label{le:22} \em (Song and Qi \cite[Theorem 4.3]{SQ})  Let $\mathcal{A} = (a_{i_1\cdots i_m}) \in T_{m, n}$. Then
 	\begin{itemize}
 		\item[(i)] $\|T_\mathcal{A}\|_{\infty}\leq \max\limits_{i\in I_n}\left(\sum\limits_{i_2,\cdots,i_m=1}^{n}|a_{ii_2\cdots i_m}|\right)$;
 		\item[(ii)] $\|F_\mathcal{A}\|_{\infty}\leq\max\limits_{i\in I_n}\left(\sum\limits_{i_2,\cdots,i_m=1}^{n}|a_{ii_2\cdots i_m}|\right)^{\frac{1}{m-1}}$, when $m$ is even.
 	\end{itemize} \end{Lemma}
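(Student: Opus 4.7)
The plan is to estimate each operator norm by its definition as a supremum over the $\|\cdot\|_\infty$-unit sphere, reducing everything to a coordinatewise bound on $|(\mathcal{A}\x^{m-1})_i|$ obtained from the triangle inequality.

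First I would fix any $\x$ with $\|\x\|_\infty = 1$ and, for each $i \in I_n$, apply the triangle inequality to the multilinear expression
\[
(\mathcal{A}\x^{m-1})_i = \sum_{i_2,\ldots,i_m=1}^{n} a_{ii_2\cdots i_m} x_{i_2}\cdots x_{i_m}.
\]
Since $|x_{i_j}| \le \|\x\|_\infty = 1$ for each $j$, each monomial $|x_{i_2}\cdots x_{i_m}| \le 1$, and hence $|(\mathcal{A}\x^{m-1})_i| \le \sum_{i_2,\ldots,i_m} |a_{ii_2\cdots i_m}|$. This bound is the core estimate used in both parts.

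For part (i), I need to absorb the scaling factor $\|\x\|_2^{2-m}$ from the definition \reff{TA}. The observation is that whenever $\|\x\|_\infty = 1$ there exists an index $j$ with $|x_j| = 1$, so $\|\x\|_2^2 \ge |x_j|^2 = 1$, giving $\|\x\|_2 \ge 1$. Since $m \ge 2$ implies $2-m \le 0$, this yields $\|\x\|_2^{2-m} \le 1$. Combining with the coordinate bound above and taking maximum over $i$ produces the claimed estimate on $\|T_\mathcal{A}\|_\infty$. For part (ii), I would exploit that $m$ even makes $m-1$ odd, so $t \mapsto t^{1/(m-1)}$ is a well-defined, continuous, strictly increasing odd function on $\mathbb{R}$. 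Applying it componentwise to $\mathcal{A}\x^{m-1}$ and using monotonicity of the absolute value under this map gives $|F_\mathcal{A}(\x)_i| = |(\mathcal{A}\x^{m-1})_i|^{1/(m-1)} \le \bigl(\sum_{i_2,\ldots,i_m}|a_{ii_2\cdots i_m}|\bigr)^{1/(m-1)}$, after which the maximum over $i$ concludes the argument.

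The argument is essentially a direct triangle-inequality computation, so there is no serious obstacle. The only mildly non-obvious step is the observation $\|\x\|_\infty = 1 \Rightarrow \|\x\|_2 \ge 1$ needed in (i) to neutralize the $\|\x\|_2^{2-m}$ factor; without it one would only control $\mathcal{A}\x^{m-1}$ rather than $T_\mathcal{A}(\x)$. One should also verify that setting $T_\mathcal{A}(\0) = \0$ makes $T_\mathcal{A}$ continuous at the origin (trivial since $\|\mathcal{A}\x^{m-1}\|_\infty = O(\|\x\|_2^{m-1})$), but the operator-norm supremum need only be computed over nonzero $\x$.
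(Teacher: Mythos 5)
Your argument is correct and complete. Note that the paper does not actually prove this lemma; it is quoted from Song and Qi \cite[Theorem 4.3]{SQ}, so there is no in-text proof to compare against. Your direct route is the natural one: the triangle inequality gives $|(\mathcal{A}\x^{m-1})_i|\le\sum_{i_2,\ldots,i_m}|a_{ii_2\cdots i_m}|$ on the $\|\cdot\|_\infty$-unit sphere, and the two pieces you correctly identify as the only non-trivial points are exactly the right ones: the observation $\|\x\|_\infty=1\Rightarrow\|\x\|_2\ge1$ together with $2-m\le0$ to absorb the factor $\|\x\|_2^{2-m}$ in (i), and the fact that $m$ even makes $m-1$ odd so that the componentwise $(m-1)$-th root in \reff{FA} is well defined on all of $\mathbb{R}$ and satisfies $|t^{1/(m-1)}|=|t|^{1/(m-1)}$ in (ii). Nothing is missing.
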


 Recently, Song and Qi \cite{SQ-2015} defined two quantities for a P$_0$-tensor $\mathcal{A}$ with the help of the above two operators.
\begin{equation} \label{Talpha}
\alpha(T_\mathcal{A}):=\min_{\|\x \|_\infty=1}\max_{i \in I_n}x_i(T_\mathcal{A}(\x))_i
\end{equation}
for any $m$, and
\begin{equation} \label{Falpha}
\alpha(F_\mathcal{A}):=\min_{\|\x \|_\infty=1}\max_{i \in I_n}x_i(F_\mathcal{A}(\x))_i
\end{equation}
 when $m$ is even.

 The monotonicity and boundedness of two constants $\alpha(T_\mathcal{A})$ and $\alpha(F_\mathcal{A})$ for a high order tensor $\mathcal{A}$ are showed by Song and Qi \cite{SQ-2015}.
\begin{Lemma}\label{le:23}\em (Song and Qi \cite[Theorem 4.3]{SQ-2015}) Let $\mathcal{A} = (a_{i_1\cdots i_m})$ be a P$_0$ tensor in $T_{m, n}$. Then
 \begin{itemize}
\item[(i)] $\alpha(T_\mathcal{A})\leq\alpha(T_{\mathcal{A}^J_r})$ for all principal sub-tensors $\mathcal{A}^J_r$;
\item[(ii)]  $\alpha(F_\mathcal{A})\leq\alpha(F_{\mathcal{A}^J_r})$ for all principal sub-tensors $\mathcal{A}^J_r$, when $m$ is even;
\item[(iii)]  $\alpha(T_\mathcal{A})\leq \max\limits_{i\in I_n}\left(\sum\limits_{i_2,\cdots,i_m=1}^{n}|a_{ii_2\cdots i_m}|\right)$;
\item[(iv)]  $\alpha(F_\mathcal{A})\leq \max\limits_{i\in I_n}\left(\sum\limits_{i_2,\cdots,i_m=1}^{n}|a_{ii_2\cdots i_m}|\right)^{\frac1{m-1}}$, when $m$ is even.
\end{itemize}
  \end{Lemma}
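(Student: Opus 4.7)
The plan is to establish (i) and (ii) by a zero-padding trick that converts any test vector $\y \in \mathbb{R}^r$ in the definition of $\alpha(T_{\mathcal{A}^J_r})$ or $\alpha(F_{\mathcal{A}^J_r})$ into a test vector $\x \in \mathbb{R}^n$ in the definition of $\alpha(T_\mathcal{A})$ or $\alpha(F_\mathcal{A})$ that produces the same objective value, and to obtain (iii) and (iv) from the elementary bound $\alpha \leq \|\cdot\|_\infty$ combined with Lemma~\ref{le:22}.

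For (i), I fix $J \subset I_n$ with $|J|=r$ and an arbitrary $\y \in \mathbb{R}^r$ with $\|\y\|_\infty = 1$. Define $\x \in \mathbb{R}^n$ by $x_i = y_i$ for $i \in J$ and $x_i = 0$ for $i \notin J$, so $\|\x\|_\infty = 1$ and $\|\x\|_2 = \|\y\|_2$. Since the components of $\x$ outside $J$ vanish, the sum defining $(\mathcal{A}\x^{m-1})_i$ collapses to the sum defining $(\mathcal{A}^J_r \y^{m-1})_i$ whenever $i \in J$; combined with $\|\x\|_2 = \|\y\|_2$, this gives $(T_\mathcal{A}(\x))_i = (T_{\mathcal{A}^J_r}(\y))_i$ for $i \in J$, while $x_i(T_\mathcal{A}(\x))_i = 0$ for $i \notin J$. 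Hence
\begin{equation*}
\alpha(T_\mathcal{A}) \;\leq\; \max_{i \in I_n} x_i(T_\mathcal{A}(\x))_i \;=\; \max\Bigl\{0,\ \max_{i \in J} y_i(T_{\mathcal{A}^J_r}(\y))_i\Bigr\}.
\end{equation*}

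The main obstacle is to absorb the spurious $0$ on the right-hand side. For this, I first prove that $\mathcal{A}^J_r$ is itself a P$_0$-tensor by reusing the zero-padding: any nonzero $\y \in \mathbb{R}^r$ extends to a nonzero $\x \in \mathbb{R}^n$, the P$_0$-property of $\mathcal{A}$ supplies some $i$ with $x_i \neq 0$ and $x_i(\mathcal{A}\x^{m-1})_i \geq 0$, and $x_i \neq 0$ forces $i \in J$, giving $y_i(\mathcal{A}^J_r\y^{m-1})_i \geq 0$. Knowing that $\mathcal{A}^J_r$ is P$_0$, the inner max over $J$ is automatically nonnegative for every nonzero $\y$, so the $0$ is absorbed and minimizing over $\|\y\|_\infty = 1$ yields $\alpha(T_\mathcal{A}) \leq \alpha(T_{\mathcal{A}^J_r})$. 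Part (ii) repeats this argument with $F_\mathcal{A}$ in place of $T_\mathcal{A}$: the componentwise definition $F_\mathcal{A}(\x) = (\mathcal{A}\x^{m-1})^{[1/(m-1)]}$ respects the zero-padding, and since $m$ is even the exponent $1/(m-1)$ is an odd reciprocal so the map preserves sign, ensuring that $\max_{i \in J} y_i(F_{\mathcal{A}^J_r}(\y))_i \geq 0$ for all nonzero $\y$.

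For (iii) and (iv), for every $\x$ with $\|\x\|_\infty = 1$ I would use
\begin{equation*}
\max_{i \in I_n} x_i(T_\mathcal{A}(\x))_i \;\leq\; \max_{i \in I_n} |(T_\mathcal{A}(\x))_i| \;=\; \|T_\mathcal{A}(\x)\|_\infty \;\leq\; \|T_\mathcal{A}\|_\infty,
\end{equation*}
where the first step uses $|x_i| \leq 1$. Taking the minimum over $\x$ gives $\alpha(T_\mathcal{A}) \leq \|T_\mathcal{A}\|_\infty$, and the identical estimate with $F_\mathcal{A}$ in place of $T_\mathcal{A}$ gives $\alpha(F_\mathcal{A}) \leq \|F_\mathcal{A}\|_\infty$ when $m$ is even. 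Invoking parts (i) and (ii) of Lemma~\ref{le:22} then supplies the claimed row-sum bounds, completing the proof.
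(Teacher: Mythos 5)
Your proposal is correct: the zero-padding extension of a test vector $\y\in\mathbb{R}^r$ to $\x\in\mathbb{R}^n$, together with the observation that $\mathcal{A}^J_r$ inherits the P$_0$ property (which makes the inner maximum nonnegative and absorbs the spurious $0$), gives (i) and (ii), and the elementary bound $\alpha\leq\|\cdot\|_\infty$ combined with Lemma~\ref{le:22} gives (iii) and (iv). Note that the paper itself gives no proof of this lemma—it is quoted from Song and Qi \cite{SQ-2015}—but your argument is the natural restriction/padding argument one would expect there, and I see no gaps; the only point worth stating explicitly is that for $m$ even the real $(m-1)$-th root in $F_\mathcal{A}$ is the odd root and hence sign-preserving, which you do address.
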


The necessary  and sufficient conditions for P-tensor based upon $\alpha(F_\mathcal{A})$ and $\alpha(T_\mathcal{A})$ are obtained by Song and Qi \cite{SQ-2015}.
 \begin{Lemma}\label{le:24}\em (Song and Qi \cite[Theorem 4.4]{SQ-2015}) Let $\mathcal{A} \in T_{m, n}$.  Then
 \begin{itemize}
\item[(i)] $\mathcal{A}$ is a P-tensor if and only if $\alpha(T_\mathcal{A})>0$;
\item[(ii)] when $m$ is even, $\mathcal{A}$ is a P-tensor if and only if $\alpha(F_\mathcal{A})>0$.
\end{itemize}
  \end{Lemma}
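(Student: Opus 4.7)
The plan is to prove both parts by reducing the min–max quantity to the defining condition of a P-tensor via positive homogeneity and a sign-preservation observation, and then using compactness of the $\ell_\infty$ unit sphere.

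For part (i), I would first note the key algebraic identity: for any $\x\neq\0$,
\[
x_i\bigl(T_\mathcal{A}(\x)\bigr)_i \;=\; \|\x\|_2^{2-m}\, x_i\bigl(\mathcal{A}\x^{m-1}\bigr)_i,
\]
and since $\|\x\|_2^{2-m}>0$ we have $\max_i x_i(T_\mathcal{A}(\x))_i>0$ if and only if $\max_i x_i(\mathcal{A}\x^{m-1})_i>0$. For the ``if'' direction, assume $\mathcal{A}$ is a P-tensor. The map $\x\mapsto\max_i x_i(T_\mathcal{A}(\x))_i$ is continuous on the compact set $S:=\{\x\in\mathbb{R}^n:\|\x\|_\infty=1\}$ (since $T_\mathcal{A}$ is continuous away from $\0$, and $S$ is bounded away from $\0$), and by the P-tensor property this function is strictly positive on $S$, so its minimum $\alpha(T_\mathcal{A})$ is attained and strictly positive. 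Conversely, if $\alpha(T_\mathcal{A})>0$, then for any nonzero $\y$, setting $\x=\y/\|\y\|_\infty$ and using positive homogeneity of $T_\mathcal{A}$ (together with the above sign equivalence) gives $\max_i y_i(\mathcal{A}\y^{m-1})_i>0$, so $\mathcal{A}$ is a P-tensor.

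For part (ii), the only new ingredient is that when $m$ is even, $m-1$ is odd, so the map $t\mapsto t^{1/(m-1)}$ is a well-defined, sign-preserving bijection on $\mathbb{R}$. Consequently
\[
\operatorname{sign}\bigl(x_i (F_\mathcal{A}(\x))_i\bigr)\;=\;\operatorname{sign}(x_i)\cdot\operatorname{sign}\bigl((\mathcal{A}\x^{m-1})_i\bigr)\;=\;\operatorname{sign}\bigl(x_i (\mathcal{A}\x^{m-1})_i\bigr),
\]
so $\max_i x_i(F_\mathcal{A}(\x))_i>0$ if and only if $\max_i x_i(\mathcal{A}\x^{m-1})_i>0$. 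Because $F_\mathcal{A}$ is continuous and positively homogeneous on all of $\mathbb{R}^n$, the same compactness argument on $S$ and the same normalization argument $\x=\y/\|\y\|_\infty$ yield the equivalence of ``$\mathcal{A}$ is a P-tensor'' and ``$\alpha(F_\mathcal{A})>0$''.

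The only mild obstacle is verifying that the minimum in the definition of $\alpha$ is indeed attained; this is a routine compactness argument once one is careful that $T_\mathcal{A}$ is defined piecewise, with continuity at the origin irrelevant because the domain $\{\|\x\|_\infty=1\}$ excludes $\0$. Everything else—positive homogeneity, the positivity of $\|\x\|_2^{2-m}$, and the sign behavior of the odd root $t^{1/(m-1)}$—is structural and transfers the strict positivity condition cleanly between the two formulations.
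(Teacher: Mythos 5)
Your proof is correct. Note that the paper does not prove this statement itself --- it is Lemma \ref{le:24}, quoted from Song and Qi \cite[Theorem 4.4]{SQ-2015} without proof --- so there is no in-paper argument to compare against; your reasoning (the sign equivalence between $x_i(T_\mathcal{A}(\x))_i$, $x_i(F_\mathcal{A}(\x))_i$ and $x_i(\mathcal{A}\x^{m-1})_i$ via the positive factor $\|\x\|_2^{2-m}$ and the odd root $t\mapsto t^{1/(m-1)}$, combined with compactness of $\{\|\x\|_\infty=1\}$ and positive homogeneity for the normalization step) is exactly the natural and standard route to this equivalence, and every step checks out.
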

 The following conclusions about the solution of TCP$(\mathcal{A},\q)$ with P-tensor $\mathcal{A}$ are obtained by Song and Qi \cite{SQ2015,SQ15}
\begin{Lemma}\label{le:25}\em(Song and Qi \cite[Corollary 3.3, Theorem 3.4]{SQ2015} and \cite[Theorem 3.1]{SQ15}) Let $\mathcal{A} \in T_{m, n}$ be a P-tensor. Then the TCP$(\mathcal{A},\q)$ has a solution have a solution for all $\q\in \mathbb{R}^n$, and  has only zero vector solution for $\q\geq\0$.
\end{Lemma}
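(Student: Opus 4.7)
My plan is to treat the two assertions separately. The uniqueness-at-zero claim under $\q \geq \0$ is a short direct consequence of Definition \ref{d1}(ii), while the existence of a solution for arbitrary $\q$ is a Brouwer-degree / homotopy argument in which the P-tensor hypothesis enters through the strictly positive constant $\alpha(T_\mathcal{A})$ guaranteed by Lemma \ref{le:24}(i).

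For the uniqueness claim, $\x = \0$ trivially solves TCP$(\mathcal{A},\q)$ whenever $\q \geq \0$. Suppose that a nonzero $\x$ were a second solution; then $\x, \q + \mathcal{A}\x^{m-1} \geq \0$ together with $\x^\top(\q + \mathcal{A}\x^{m-1}) = 0$ force the componentwise identities $x_i(q_i + (\mathcal{A}\x^{m-1})_i) = 0$. Definition \ref{d1}(ii) yields an index $j \in I_n$ with $x_j(\mathcal{A}\x^{m-1})_j > 0$; combined with $x_j q_j \geq 0$ (from $\x \geq \0$ and $\q \geq \0$) this produces $x_j(q_j + (\mathcal{A}\x^{m-1})_j) > 0$, the desired contradiction.

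For existence I form the natural map $\Phi_\q(\x) := \x - (\x - \q - \mathcal{A}\x^{m-1})_+$, whose zeros coincide with the solutions of TCP$(\mathcal{A},\q)$, and deform it via $H(t,\x) := \x - (\x - t\q - \mathcal{A}\x^{m-1})_+$ so that any zero of $H(t,\cdot)$ is precisely a solution of TCP$(\mathcal{A}, t\q)$. The crux is a uniform a priori bound on such zeros, and this is exactly where $\alpha := \alpha(T_\mathcal{A}) > 0$ enters: positive homogeneity of $T_\mathcal{A}$ gives $\max_i x_i(\mathcal{A}\x^{m-1})_i \geq \alpha\|\x\|_\infty^2 \|\x\|_2^{m-2}$ for every $\x \neq \0$, whereas the complementarity relations of TCP$(\mathcal{A}, t\q)$ give $x_i(\mathcal{A}\x^{m-1})_i = -t x_i q_i$ and hence $\max_i x_i(\mathcal{A}\x^{m-1})_i \leq \|\x\|_\infty \|\q\|_\infty$; together with $\|\x\|_2 \geq \|\x\|_\infty$ these inequalities force $\|\x\|_\infty \leq (\|\q\|_\infty/\alpha)^{1/(m-1)}$, independently of $t \in [0,1]$.

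With this uniform bound the zeros of $H$ all lie inside a fixed open ball $B$, so homotopy invariance of the Brouwer degree yields $\deg(\Phi_\q, B, \0) = \deg(\Phi_\0, B, \0)$. A second homotopy $K(s,\x) := \x - s(\x - \mathcal{A}\x^{m-1})_+$ connecting the identity (at $s = 0$) to $\Phi_\0$ (at $s = 1$) has only $\x = \0$ as a zero for every $s \in [0,1]$: a nonzero zero $\x$ must satisfy $\x \geq \0$ and, by a coordinate inspection of $\x = s(\x - \mathcal{A}\x^{m-1})_+$, $x_i(\mathcal{A}\x^{m-1})_i \leq 0$ for every $i$, contradicting Definition \ref{d1}(ii). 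Hence $\deg(\Phi_\0, B, \0) = 1$, so $\Phi_\q$ has a zero in $B$ and TCP$(\mathcal{A},\q)$ is solvable. The main obstacle throughout is the uniform a priori bound: without the strict positivity of the P-tensor inequality (and of $\alpha(T_\mathcal{A})$) the homotopy argument collapses, which is precisely why the P$_0$-tensor hypothesis would not suffice.
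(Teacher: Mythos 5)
The paper does not prove this lemma: it is quoted verbatim from Song and Qi's earlier work (\cite[Corollary 3.3, Theorem 3.4]{SQ2015}, \cite[Theorem 3.1]{SQ15}), where the existence part is obtained by identifying P-tensors as strictly semi-positive and then invoking known solvability theory for the resulting NCP. Your argument is a correct, self-contained replacement for that citation. The uniqueness-at-zero part is exactly the standard one-line argument: complementarity forces every product $x_i(q_i+(\mathcal{A}\x^{m-1})_i)$ to vanish, while the P-property plus $x_jq_j\ge 0$ makes one of them strictly positive. For existence, your degree-theoretic route is sound: the natural-map reformulation $\Phi_\q(\x)=\x-(\x-\q-\mathcal{A}\x^{m-1})_+$ is the standard equivalent fixed-point form of the NCP; your a priori bound $\|\x\|_\infty\le(\|\q\|_\infty/\alpha(T_\mathcal{A}))^{1/(m-1)}$, uniform in $t$, is precisely the estimate the paper itself derives in its error-bound theorem (the right-hand inequality of \reff{eq:31}), so it is certainly available; and the second homotopy $K(s,\x)=\x-s(\x-\mathcal{A}\x^{m-1})_+$ correctly isolates $\x=\0$ as the only zero for every $s$, since a nonzero zero would be nonnegative with $x_i(\mathcal{A}\x^{m-1})_i\le 0$ for all $i$, contradicting Definition \ref{d1}(ii). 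What your approach buys is transparency about where the P-hypothesis (as opposed to P$_0$) is really used — namely in the strict positivity of $\alpha(T_\mathcal{A})$ that makes the a priori bound finite — at the cost of importing Brouwer degree machinery that the cited references package inside a black-box NCP existence theorem.
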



\section{Upper bounds of $\alpha(F_{\mathcal{A}})$ and $\alpha(T_{\mathcal{A}})$}
\hspace{4mm}

 The quantities $\alpha(T_\mathcal{A})$ and $\alpha(F_\mathcal{A})$ play a fundamental role  in the error bound analysis of TCP($\q, \mathcal{A}$).  The two quantities are in general not easy to compute. However, it is easy to derive some upper bounds for them when $\mathcal{A}$ is a P-tensor. Recently, Song and Qi \cite{SQ-2015} obtained the monotonicity and boundedness of two constants $\alpha(T_\mathcal{A})$ and $\alpha(F_\mathcal{A})$ for a P-tensor $\mathcal{A}$.  In this section, we will establish some smaller upper bounds. For this purpose, we introduce two quantities about a P-tensor $\mathcal{A}$:
 \begin{equation}\label{eq:41}
\delta_H(\mathcal{A}) := \min\{\lambda_H(\mathcal{A}_r^J); J\subset I_n, r\in I_n\},
\end{equation}
where  $\lambda_H(\mathcal{A})$ denotes the smallest of H-eigenvalues (if any exists) of a P-tensor $\mathcal{A}$;
 \begin{equation}\label{eq:42}
\delta_Z(\mathcal{A}) := \min\{\lambda_Z(\mathcal{A}_r^J); J\subset I_n, r\in I_n\},
\end{equation}
where  $\lambda_Z(\mathcal{A})$ denotes the smallest of Z-eigenvalues (if any exists) of a P-tensor $\mathcal{A}$. The above two minimum ranges over those principal sub-tensors of $\mathcal{A}$ which indeed have H-eigenvalues (Z-eigenvalues).

It follows from Lemma \ref{le:21} that all principal diagonal ertries of $\mathcal{A}$ are positive and all  H-(Z-)eigenvalues of each principal sub-tensor of $\mathcal{A}$ are positive when $m$ is even. So $\delta_H(\mathcal{A})$ and $\delta_Z(\mathcal{A})$ are well defined, finite and positive when $m$ is even.  Now we give some upper bounds of $\alpha(F_{\mathcal{A}})$ and $\alpha(T_{\mathcal{A}})$ using the quantities $\delta_H(\mathcal{A})$ and $\delta_Z(\mathcal{A})$.

\begin{Theorem}\label{th:41}\em Let $\mathcal{A} \in T_{m, n}$ ($m\geq2$) be a P-tensor, and let $m$ be an even number.  Then
	\begin{equation}\label{eq:43}
\alpha(F_{\mathcal{A}})\leq (\delta_H(\mathcal{A}))^{\frac1{m-1}}\leq (\min_{i\in I_n}a_{ii\cdots i})^{\frac1{m-1}}.
	\end{equation}
\end{Theorem}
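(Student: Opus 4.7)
The plan is to construct, for each principal sub-tensor $\mathcal{A}_r^J$ that admits an H-eigenpair, an explicit test vector $\x$ with $\|\x\|_\infty = 1$ that drives the $\min$-$\max$ defining $\alpha(F_{\mathcal{A}})$ down to $\lambda_H(\mathcal{A}_r^J)^{1/(m-1)}$. Once this is done for every H-eigenvalue of every principal sub-tensor, passing to the infimum gives the first inequality in \reff{eq:43}; the second inequality then falls out by looking at one-dimensional sub-tensors.

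First I would fix a principal sub-tensor $\mathcal{A}_r^J$ that actually has an H-eigenvalue $\lambda$ and choose an H-eigenvector $\y \in \mathbb{R}^r$ normalized so that $\|\y\|_\infty = 1$. Let $\x \in \mathbb{R}^n$ be its zero-padded extension: $x_i = y_i$ for $i \in J$ and $x_i = 0$ for $i \notin J$. The key calculation is that, because the indices outside $J$ correspond to zero entries of $\x$, the tensor action collapses back onto the sub-tensor:
\begin{equation*}
(\mathcal{A}\x^{m-1})_i = (\mathcal{A}_r^J \y^{m-1})_i = \lambda y_i^{m-1} = \lambda x_i^{m-1}, \quad i \in J.
\end{equation*}
For $i \notin J$ we have $x_i = 0$, and so $x_i(F_{\mathcal{A}}(\x))_i = 0$ regardless of $(\mathcal{A}\x^{m-1})_i$.

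Second, I would invoke Lemma \ref{le:21}(iii) to note that $\lambda > 0$, and use that $m$ is even so that $m-1$ is odd, hence the $(m-1)$-th root in \reff{FA} is well-defined and multiplicative. This gives, for $i \in J$,
\begin{equation*}
(F_{\mathcal{A}}(\x))_i = (\lambda x_i^{m-1})^{1/(m-1)} = \lambda^{1/(m-1)} x_i,
\qquad x_i (F_{\mathcal{A}}(\x))_i = \lambda^{1/(m-1)} x_i^2.
\end{equation*}
Taking $\max_{i\in I_n}$ and using $\|\x\|_\infty = 1$ yields $\max_i x_i (F_{\mathcal{A}}(\x))_i = \lambda^{1/(m-1)}$. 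By the definition \reff{Falpha}, this single test vector produces
$\alpha(F_{\mathcal{A}}) \le \lambda^{1/(m-1)}$. Since this holds for every H-eigenvalue of every principal sub-tensor that has one, the definition of $\delta_H(\mathcal{A})$ in \reff{eq:41} and the monotonicity of $t \mapsto t^{1/(m-1)}$ on $(0,\infty)$ give $\alpha(F_{\mathcal{A}}) \le (\delta_H(\mathcal{A}))^{1/(m-1)}$.

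For the rightmost inequality I would specialize to the one-dimensional principal sub-tensors $\mathcal{A}_1^{\{i\}}$: each such sub-tensor is just the scalar $a_{ii\cdots i}$, which is its unique H-eigenvalue (for any nonzero scalar eigenvector). By Lemma \ref{le:21}(i), $a_{ii\cdots i} > 0$, so it contributes to the minimum in \reff{eq:41}, yielding $\delta_H(\mathcal{A}) \le \min_{i \in I_n} a_{ii\cdots i}$, and raising to the $1/(m-1)$ power finishes the argument. The only subtle point — and the one to handle carefully in the write-up — is the sign/root computation $(\lambda x_i^{m-1})^{1/(m-1)} = \lambda^{1/(m-1)} x_i$; this is where the evenness of $m$ (giving odd $m-1$) and the positivity of $\lambda$ guaranteed by the P-tensor hypothesis are both essential.
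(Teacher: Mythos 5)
Your proof is correct, but it takes a genuinely different route from the paper's. The paper proves the left-hand inequality of \reff{eq:43} indirectly: it sets $\delta=\delta_H(\mathcal{A})$, forms the shifted tensor $\mathcal{B}=\mathcal{A}-\delta\mathcal{I}$, observes that the eigenvector of the extremal principal sub-tensor makes $\mathcal{B}^J_r$ fail the P-property, concludes via Lemma \ref{le:21}(ii) that $\mathcal{B}$ itself is not a P-tensor, and only then extracts a test vector $\y$ with $\max_i y_i(\mathcal{A}\y^{m-1})_i\leq\delta$ from the negation of Definition \ref{d1}(ii), finishing with a manipulation of the fractional powers in $F_{\mathcal{A}}$. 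You instead build the test vector explicitly, by zero-padding the H-eigenvector of the relevant sub-tensor, and compute $\max_i x_i(F_{\mathcal{A}}(\x))_i=\lambda^{1/(m-1)}$ exactly; this bypasses the shift argument and the appeal to Lemma \ref{le:21}(ii) entirely, at the cost of needing Lemma \ref{le:21}(iii) to guarantee $\lambda>0$ (without which the maximum over $I_n$ could be attained at an index outside $J$ and the bound would degenerate to $\alpha(F_{\mathcal{A}})\leq 0$ rather than $\alpha(F_{\mathcal{A}})\leq\lambda^{1/(m-1)}$). Your version is more elementary and yields the sharper information that the bound $\lambda^{1/(m-1)}$ is actually attained by a specific test vector for every H-eigenvalue $\lambda$ of every principal sub-tensor, not just for $\delta_H(\mathcal{A})$; the paper's shift technique, on the other hand, is the one that transfers verbatim to Theorem \ref{th:42}, where the shift is by $\mathcal{E}$ instead of $\mathcal{I}$. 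The treatment of the right-hand inequality via the one-dimensional sub-tensors is identical in both arguments.
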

\begin{proof} It follows from Lemma \ref{le:21} (i) that $$a_{ii\cdots i}>0\mbox{ for all }i\in I_n.$$ Since $\mathcal{A}^J_1=(a_{ii\cdots i})$ ($J=\{i\}$) is m-order 1-dimensional principal sub-tensor of $\mathcal{A}$,  then $a_{ii\cdots i}$ is a H-eigenvalue of $\mathcal{A}^J_1$ for all $i\in I_n$, and hence
$$\delta_H(\mathcal{A})\leq \min_{i\in I_n}a_{ii\cdots i}.$$

Next we show the left-hand inequality. Let $\delta=\delta_H(\mathcal{A})$ and $\mathcal{B}=\mathcal{A}-\delta\mathcal{I}$, where $\mathcal{I}$ is unit tensor.  Then it follows from the definition of $\delta_H(\mathcal{A})$ that $\delta$ is  a H-eigenvalue of a principal sub-tensor $\mathcal{A}^J_r$ of $\mathcal{A}$. Then there exists $\x^*\in \mathbb{R}^r\setminus\{\0\}$ such that  $$\left(\mathcal{A}^J_r-\delta\mathcal{I}^J_r\right)(\x^*)^{m-1}=\mathcal{A}^J_r(\x^*)^{m-1}-\delta(\x^*)^{[m-1]}=\0.$$
 So the principal sub-tensor $\mathcal{B}^J_r=\mathcal{A}^J_r-\delta\mathcal{I}^J_r$ of $\mathcal{B}$ is not a P-tensor. Thus  it follows from Lemma \ref{le:21} (ii) that $\mathcal{B}=\mathcal{A}-\delta\mathcal{I}$ is not a P-tensor.
Consequently, there exists a vector $\y$ with $\|\y\|_\infty=1$ such that
$$\max_{i\in I_n}y_i(\mathcal{B}\y^{m-1})_i=\max_{i\in I_n}\left(y_i(\mathcal{A}\y^{m-1})_i-\delta y_i^{m}\right)\leq 0.$$
So, we have $$y_l(\mathcal{A}\y^{m-1})_l-\delta y_l^{m}\leq\max_{i\in I_n}\left(y_i(\mathcal{A}\y^{m-1})_i-\delta y_i^{m}\right)\leq 0 \mbox{ for all }l\in I_n,$$
which implies that for some $j\in I_n$,
$$\max_{i\in I_n}y_i(\mathcal{A}\y^{m-1})_i=y_j(\mathcal{A}\y^{m-1})_j\leq\delta y_j^{m}\leq \delta \|\y\|_\infty^m=\delta.$$
	It follows from the definition (Equation (\ref{Falpha})) of $\alpha(F_{\mathcal{A}})$ that
	\begin{align}
	\alpha(F_{\mathcal{A}})\leq& \max_{i\in I_n}y_i(F_{\mathcal{A}}(\y))_i=\max_{i\in I_n}y_i(\mathcal{A} \y^{m-1})_i^{\frac1{m-1}}\nonumber\\
	=&\max_{i\in I_n}y_i^{\frac{m-2}{m-1}}(y_i(\mathcal{A} \y^{m-1})_i)^{\frac1{m-1}}\nonumber\\
	\leq& \|\y\|_\infty^{\frac{m-2}{m-1}}\max_{i\in I_n}(y_i(\mathcal{A} \y^{m-1})_i)^{\frac1{m-1}}\nonumber\\
	\leq& \delta^{\frac1{m-1}}.\nonumber
	\end{align}
	The desired inequality follows.
\end{proof}
\begin{Theorem}\label{th:42}\em Let $\mathcal{A} \in T_{m, n}$ ($m\geq2$) be a P-tensor, and let $m$ be an even number.  Then
	\begin{equation}\label{eq:44}
\alpha(T_{\mathcal{A}})\leq \delta_Z(\mathcal{A})\leq \min_{i\in I_n}a_{ii\cdots i}.
	\end{equation}
\end{Theorem}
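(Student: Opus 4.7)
The plan is to mirror Theorem~\ref{th:41}'s two-step structure: a one-line verification for the right-hand inequality, then a constructive argument for the left, with the key structural difference being that the ``$\mathcal{A}-\delta\mathcal{I}$'' shift trick is not available in the Z-eigenvalue setting.

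For the right-hand inequality $\delta_Z(\mathcal{A}) \le \min_{i\in I_n} a_{ii\cdots i}$, I would fix $i \in I_n$ and take $J=\{i\}$, so the $1$-dimensional principal sub-tensor is $\mathcal{A}^J_1=(a_{ii\cdots i})$. The scalar $\x^*=1$ satisfies $(\x^*)^\top\x^*=1$ and $\mathcal{A}^J_1(\x^*)^{m-1}=a_{ii\cdots i}\cdot\x^*$, exhibiting $a_{ii\cdots i}$ as a Z-eigenvalue of $\mathcal{A}^J_1$. Minimizing over $i$ delivers the bound.

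For the left-hand inequality, note that the shift trick used in Theorem~\ref{th:41} does not translate, because the Z-eigenvalue equation $\mathcal{A}\x^{m-1}=\lambda\x$ has no identity-tensor analogue on the right. Instead I would proceed constructively. Set $\delta=\delta_Z(\mathcal{A})>0$ (positivity from Lemma~\ref{le:21}(iv)). By the definition of $\delta_Z(\mathcal{A})$, there exists a principal sub-tensor $\mathcal{A}^J_r$ with a Z-eigenvector $\x^*\in\mathbb{R}^r$, $(\x^*)^\top\x^*=1$, satisfying $\mathcal{A}^J_r(\x^*)^{m-1}=\delta\x^*$. Componentwise, $x^*_i(\mathcal{A}^J_r(\x^*)^{m-1})_i=\delta(x^*_i)^2$, whence $\max_i x^*_i(\mathcal{A}^J_r(\x^*)^{m-1})_i=\delta\|\x^*\|_\infty^2$.

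To transfer this to the whole tensor, I would extend $\x^*$ by zeros to $\y\in\mathbb{R}^n$ supported on $J$. Since indices outside $J$ contribute nothing to $y_i$ or to $(\mathcal{A}\y^{m-1})_i$ for $i\in J$, the identity $\max_i y_i(\mathcal{A}\y^{m-1})_i = \delta\|\y\|_\infty^2$ persists, and because $\|\y\|_2=1$ we have $T_\mathcal{A}(\y)=\mathcal{A}\y^{m-1}$. Rescaling to $\z=\y/\|\y\|_\infty$ (note $\|\y\|_\infty=\|\x^*\|_\infty>0$) and using the degree-$1$ positive homogeneity $T_\mathcal{A}(t\x)=tT_\mathcal{A}(\x)$ (read off directly from (\ref{TA})), the quadratic scaling $z_i(T_\mathcal{A}(\z))_i = y_i(T_\mathcal{A}(\y))_i/\|\y\|_\infty^2$ yields $\max_i z_i(T_\mathcal{A}(\z))_i=\delta$. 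Since $\|\z\|_\infty=1$, definition (\ref{Talpha}) gives $\alpha(T_\mathcal{A})\le\delta$. A cleaner alternative would be to first establish $\alpha(T_{\mathcal{A}^J_r})\le\delta$ on the sub-tensor via the same rescaling step and then invoke the monotonicity Lemma~\ref{le:23}(i). The principal obstacle is that no clean shift argument is available; the compensating idea is that plugging the Z-eigenvector itself into $T_\mathcal{A}$ works, provided one carefully manages the interplay between the $\|\cdot\|_2$-normalization used for Z-eigenvectors and the $\|\cdot\|_\infty$-normalization used in the definition of $\alpha(T_\mathcal{A})$.
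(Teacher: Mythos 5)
Your proof is correct, but the left-hand inequality is handled by a genuinely different route than the paper's, and your stated reason for deviating is factually wrong even though the deviation itself succeeds. You claim the shift trick ``does not translate, because the Z-eigenvalue equation $\mathcal{A}\x^{m-1}=\lambda\x$ has no identity-tensor analogue on the right'' --- but for even $m$ it does: the paper sets $\mathcal{E}=I^{\frac{m}{2}}$, which satisfies $\mathcal{E}\x^{m-1}=\|\x\|_2^{m-2}\x$ (Chang--Pearson--Zhang), forms $\mathcal{B}=\mathcal{A}-\delta\mathcal{E}$, observes that $\mathcal{B}^J_r(\x^*)^{m-1}=\0$ forces $\mathcal{B}$ not to be a P-tensor via Lemma \ref{le:21}(ii), and then extracts a unit-$\infty$-norm vector $\y$ with $\max_i y_i(\mathcal{A}\y^{m-1})_i\le\delta\|\y\|_2^{m-2}$, from which $\alpha(T_\mathcal{A})\le\delta$ follows after multiplying by $\|\y\|_2^{2-m}$. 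Your argument instead evaluates the defining minimum of $\alpha(T_\mathcal{A})$ directly at the zero-extension of the Z-eigenvector, rescaled to unit $\infty$-norm: the componentwise identity $x^*_i(\mathcal{A}^J_r(\x^*)^{m-1})_i=\delta(x^*_i)^2$ together with $\delta>0$ (Lemma \ref{le:21}(iv), which is where evenness enters for you) gives $\max_i y_i(T_\mathcal{A}(\y))_i=\delta\|\y\|_\infty^2$ since $\|\y\|_2=1$, and degree-one homogeneity of $T_\mathcal{A}$ turns this into $\alpha(T_\mathcal{A})\le\delta$; the zero-extension step and the quadratic rescaling are both checked correctly. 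Your route is more elementary and constructive --- it produces an explicit test vector and never invokes the non-P characterization or the inheritance Lemma \ref{le:21}(ii) --- while the paper's shift argument is more uniform, running in parallel with the H-eigenvalue case of Theorem \ref{th:41} and requiring no handling of the zero-padding or of the interplay between the two norms.
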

\begin{proof} Using similar  proof technique of Theorem \ref{th:41}, we have the right-hand inequality holds. Next we show the left-hand inequality. Let $\delta=\delta_Z(\mathcal{A})$ and $\mathcal{B}=\mathcal{A}-\delta\mathcal{E}$, where $\mathcal{E}=I^{\frac{m}2}_2$ and $I_2$ is $n\times n$ unit matrix ($\mathcal{E}\x^{m-1}=\|\x\|^{m-2}_2\x$, see Chang, Pearson, Zhang \cite{CPZ09}).  Then it follows from the definition of $\delta_Z(\mathcal{A})$ that $\delta$ is  a Z-eigenvalue of a principal sub-tensor $\mathcal{A}^J_r$ of $\mathcal{A}$. Then there exists $\x^*\in \mathbb{R}^r\setminus\{\0\}$ such that $(\x^*)^\top\x^*=1$ and $$\left(\mathcal{A}^J_r-\delta\mathcal{E}^J_r\right)(\x^*)^{m-1}=\mathcal{A}^J_r(\x^*)^{m-1}-\delta\x^*=\0.$$ So the principal sub-tensor $\mathcal{B}^J_r=\mathcal{A}^J_r-\delta\mathcal{E}^J_r$ of $\mathcal{B}$ is not a P-tensor. Thus  it follows from Lemma \ref{le:21} (ii) that $\mathcal{B}=\mathcal{A}-\delta\mathcal{E}$ is not a P-tensor.
Consequently, there exists a vector $\y$ with $\|\y\|_\infty=1$ such that
$$\max_{i\in I_n}y_i(\mathcal{B}\y^{m-1})_i=\max_{i\in I_n}\left(y_i(\mathcal{A}\y^{m-1})_i-\delta\|\y\|_2^{m-2} y_i^2\right)\leq 0.$$
So, we have $$y_l(\mathcal{A}\y^{m-1})_l-\delta \|\y\|_2^{m-2} y_l^2\leq\max_{i\in I_n}\left(y_i(\mathcal{A}\y^{m-1})_i-\delta \|\y\|_2^{m-2} y_i^2\right)\leq 0\mbox{ for all }l\in I_n,$$
which implies that for some $j\in I_n$,
$$\max_{i\in I_n}y_i(\mathcal{A}\y^{m-1})_i=y_j(\mathcal{A}\y^{m-1})_j\leq\delta \|\y\|^{m-2}_2 y^2_j\leq \delta \|\y\|^{m-2}_2\|\y\|_\infty^2=\|\y\|^{m-2}_2\delta.$$
It follows from the definition (Equation (\ref{Talpha})) of $\alpha(T_{\mathcal{A}})$ that
	\begin{align}
 \alpha(T_{\mathcal{A}})\leq& \max\limits_{i\in I_n}y_i(T_{\mathcal{A}}(\y))_i=\max_{i\in I_n}y_i(\|\y\|_2^{2-m}\mathcal{A} \y^{m-1})_i\nonumber\\
 =&\|\y\|_2^{2-m}\max_{i\in I_n}y_i(\mathcal{A} \y^{m-1})_i\nonumber\\
 \leq& \|\y\|_2^{2-m}\|\y\|^{m-2}_2\delta\nonumber\\
 =& \delta\nonumber.\nonumber
 \end{align}
	The desired inequality follows.
\end{proof}
\begin{Question}\label{Q1}\em It is known from Lemma \ref{le:24} and Theorem \ref{th:41},\ref{th:42} that for  a P-tensor $\mathcal{A}$,
$$(\min_{i\in I_n}a_{ii\cdots i})^{\frac1{m-1}}\geq \alpha(F_{\mathcal{A}})>0\mbox{ and } \min_{i\in I_n}a_{ii\cdots i}\geq\alpha(T_{\mathcal{A}})>0.$$
Then we have the following questions for further research. \begin{itemize}
  \item[(i)]  Do two constants $\alpha(F_{\mathcal{A}})$ and $\alpha(T_{\mathcal{A}})$ have a positive lower bound?
  \item[(ii)]  Are the above upper bounds is the smallest?
  \end{itemize}
\end{Question}


\section{Conclusions}
\hspace{4mm}
In this paper,
We introduce two quantities about a P-tensor $\mathcal{A}$ by means of H- and Z-eigenvalues of real tensors:
$$\delta_H(\mathcal{A}) := \min\{\lambda_H(\mathcal{A}_r^J); J\subset I_n, r\in I_n\},$$ $$
\delta_Z(\mathcal{A}) := \min\{\lambda_Z(\mathcal{A}_r^J); J\subset I_n, r\in I_n\}.$$
The upper bounds are obtained, which only depend on the diagonal entries of tensor.
  \begin{itemize}
  \item[(iii)]  $\alpha(F_{\mathcal{A}})\leq (\delta_H(\mathcal{A}))^{\frac1{m-1}}\leq (\min\limits_{i\in I_n}a_{ii\cdots i})^{\frac1{m-1}} $ when $m$ is even.
  \item[(iv)] $\alpha(T_{\mathcal{A}})\leq \delta_Z(\mathcal{A})\leq \min\limits_{i\in I_n}a_{ii\cdots i}$ when $m$ is even.
  \end{itemize}



\end{document}